\newtheorem{lemma}{Lemma}
\newtheorem{remark}{Remark}
\newtheorem{theorem}{Theorem}
\newtheorem{corollary}{Corollary}
\newcommand{\vertiii}[1]{{\left\vert\kern-0.25ex\left\vert\kern-0.25ex\left\vert #1 
    \right\vert\kern-0.25ex\right\vert\kern-0.25ex\right\vert}}
\newcommand{\EE}{{\mathbb{E}}}
\newcommand{\R}{\mathbb {R}}
\newcommand{\cI}{\mathcal {I}}
\newcommand{\cE}{\mathcal {E}}
\newcommand{\cR}{\mathcal {R}}
\newcommand{\cL}{\mathcal {L}}
\newcommand{\cP}{\mathcal {P}}
\newcommand{\tmix}{t_{\textsc{mix}}}
\newcommand{\ent}{{\mathrm{Ent}}}
\newcommand{\opi}{\widehat{\pi}}
\newcommand{\oQ}{\widehat{Q}}
\newcommand{\of}{\widehat{f}}
\newcommand{\ok}{\widehat{\kappa}}
\newcommand{\e}{{\mathfrak{e}}}
\newcommand{\bm}{{\underline{{\mathfrak m}}}}
\newcommand{\bM}{\overline{\mathfrak m}}
\newcommand{\Var}{{\mathrm{Var}}}
\newcommand\preceqdot{\mathrel{\ooalign{$\preceq$\cr
  \hidewidth\raise0.225ex\hbox{$\cdot\mkern0.5mu$}\cr}}}
\title{Modified log-Sobolev inequalities for strong-Rayleigh measures}
\author{Jonathan Hermon, Justin Salez}
\begin{document}

\maketitle
\begin{abstract}
We establish universal modified log-Sobolev inequalities for reversible Markov chains on the boolean lattice $\{0,1\}^n$, under the only assumption that the invariant law $\pi$ satisfies a form of negative dependence  known as the \emph{stochastic covering property}. This condition is strictly weaker than the  strong Rayleigh property, and is satisfied in particular by all determinantal measures, as well as the uniform distribution over the set of bases of any balanced matroid. In the special case where $\pi$ is $k-$homogeneous, our results imply the celebrated concentration inequality for Lipschitz functions due to Pemantle \& Peres (2014). As another application, we deduce that the natural Monte-Carlo Markov Chain used to sample from $\pi$  has mixing time at most $kn\log\log\frac{1}{\pi(x)}$ when initialized in state $x$. To the best of our knowledge, this is the first work relating negative dependence and modified log-Sobolev inequalities.
\end{abstract}

\tableofcontents

\section{Introduction}
\subsection{Functional inequalities for Markov chains}

Consider a reversible Markov generator $Q$ with respect to some probability distribution $\pi$ on a finite state space  $\Omega$. In other words, $Q$ is a $\Omega\times\Omega$ matrix with non-negative off-diagonal entries, with each row summing up to $0$, and satisfying the \emph{local balance} equations
\begin{eqnarray}
\label{reversible}
\pi(x)Q(x,y) & = & \pi(y)Q(y,x),\qquad \qquad (x,y)\in \Omega^2.
\end{eqnarray}
When $Q$ is irreducible, the underlying Markov semi-group $p_t=e^{tQ}$ \emph{mixes}, in the sense that
\begin{eqnarray}
\label{mixing}
p_t(x,y) & \xrightarrow[t\to\infty]{} & \pi(y),
 \end{eqnarray} 
for all states $x,y\in\Omega$. The time-scale on which this convergence occurs is traditionally measured by the so-called \emph{mixing-times}, defined for any initial state $x\in\Omega$ and any precision $\varepsilon\in(0,1)$ by
\begin{eqnarray}
\label{def:tmix}\tmix(x;\varepsilon) & := & \min\left\{t\ge 0\colon \left\|p_t(x,\cdot)-\pi\right\|_{\textsc{tv}}\le \varepsilon\right\},
\end{eqnarray}
where $\|\mu-\nu\|_\textsc{tv}=\max_{A\subseteq \Omega}\left|\mu(A)-\nu(A)\right|$. 
Estimating this fundamental parameter is a fascinating theoretical problem with many practical applications, see e.g. the books \cite{MR2341319,MR3726904}. 

Throughout the paper, we will use the standard notation $\EE_\pi\left[\cdot\right]$, $\Var_\pi\left(\cdot\right)$ and $\ent_\pi(\cdot)$ for expectation, variance and entropy over the probability space $(\Omega,\cP(\Omega),\pi)$, i.e.
\begin{eqnarray}
\EE_\pi\left[f\right] & := & \sum_{x\in\Omega}\pi(x)f(x),\\
\Var_\pi(f) & := & \EE_\pi\left[f^2\right]-\EE_\pi^2[f],\\
\ent_\pi(f) & := & \EE_\pi\left[f \log f\right]-\EE_\pi[f]\log\EE_\pi[f].
\end{eqnarray}
All logarithms appearing here are {natural} logarithms. Some of the most powerful controls on  mixing-times are obtained by establishing appropriate {functional inequalities}  for the \emph{Dirichlet form}:
\begin{eqnarray}
\cE_\pi(f,g) & := & -\EE_\pi\left[fQg\right].
\end{eqnarray}
In particular, the \emph{Poincaré constant} $\lambda(Q)$, the \emph{modified log-Sobolev constant} $\alpha(Q)$, and the  \emph{log-Sobolev constant} $\rho(Q)$  are respectively defined as the largest numbers $\lambda,\alpha,\rho\ge 0$ such that 
\begin{eqnarray}
\label{PI}
 \cE_\pi(f,f) & \ge & \lambda\Var_\pi(f);\\
\label{MLSI}
\cE_\pi\left(f,\log f\right) & \ge & \alpha\,\ent_\pi(f);\\
\label{LSI}
\cE_\pi\left(\sqrt{f},\sqrt{f}\right) & \ge & \rho\,\ent_\pi(f).
\end{eqnarray}
for all positive observables $f\colon \Omega\to(0,\infty)$.
It is classical that these inequalities are increasing in strength, in the precise sense that one always has
 \begin{eqnarray}
\label{hier}
2\lambda(Q) & \ge \ \alpha(Q) \  \ge & 4\rho(Q). 
 \end{eqnarray} 

The constants $\lambda(Q),\alpha(Q),\rho(Q)$ provide fundamental controls on the  underlying Markov semi-group, from transport and hypercontractivity to quantitative rates of convergence with respect to stronger metrics than total-variation distance. We refer the reader to the survey papers \cite{MR1410112,MR2283379} for details. Let us here only mention the following classical implications in terms of mixing-times:
\begin{eqnarray}
\label{mixing:PI}
\tmix (x;\varepsilon) & \le & \frac{1}{2\lambda(Q)}\left(\log\frac{1}{\pi(x)}+\log\frac{1}{4\varepsilon^2}\right),\\
\label{mixing:MLSI}
\tmix (x;\varepsilon) & \le & \frac{1}{\alpha(Q)}\left(\log\log\frac{1}{\pi(x)}+\log\frac{1}{2\varepsilon^2}\right),\\
\tmix (x;\varepsilon) & \le & \frac{1}{4\rho(Q)}\left(\log\log\frac{1}{\pi(x)}+\log\frac{1}{2\varepsilon^2}\right).
\end{eqnarray}
Another important motivation for establishing functional inequalities is the \emph{concentration-of-measure} phenomenon \cite{MR3185193,MR1767995}. For example, a now-classical use of the \emph{Herbst argument} (see Section \ref{sec:herbst}) yields the following sub-Gaussian tail estimate: for all $f\colon\Omega\to\R$ and $a\ge 0$,
\begin{eqnarray}
\label{conc:MLSI}
\pi\left(f \ge  \EE_\pi(f)+a\right) & \le & \exp\left(-\frac{\alpha(Q)a^2}{4v(f)}\right),
\end{eqnarray}
 where $v(f)$ denotes the maximal  one-sided  quadratic variation of $f$ under $Q$, i.e.
\begin{eqnarray}
v(f) & := & \max_{x\in\Omega}\left\{\sum_{y\in\Omega}Q(x,y)\left[f(y)-f(x)\right]^2_+\right\}.
\end{eqnarray}

In view of the above estimates, it is clear that the modified log-Sobolev inequality $(\textbf{MLSI})$ (\ref{MLSI}) is much more powerful than a Poincaré inequality (\ref{PI}) with the same constant. Unfortunately, establishing sharp lower bounds on the modified log-Sobolev constant of practical-purpose Markov chains remains a notoriously challenging task. The aim of the present paper is to provide simple, universal functional-analytic estimates in the case where the state space $\Omega$ is a subset of the boolean lattice $\{0,1\}^n$, and the probability law $\pi$ satisfies an appropriate form of negative dependence known as the \emph{stochastic covering property} (\textbf{SCP}) \cite{MR3197973}. The precise definition will be recalled in the next section, but let us already point out that the \textbf{SCP}  is strictly weaker than the well-known strong Rayleigh property, and that it is satisfied in particular by all determinantal measures, as well as the uniform distribution over the set of bases of any balanced matroid. 

To the best of our knowledge, this is the first work relating negative dependence and modified log-Sobolev inequalities. Among other consequences, our results imply the celebrated sub-Gaussian concentration estimate for Lipschitz functions due to Pemantle \& Peres (2014) \cite{MR3197973}. In addition, we develop a new general framework for proving discrete functional inequalities using couplings, which should be applicable to many other settings. In particular, since the first appearance of our paper online, we have used it in  \cite{ZRPMLSI} to confirm a prediction of Caputo and Posta regarding the validity of the dimension-free \textbf{MLSI} for the mean-field zero-range process \cite{ZRPconjecture} with increasing rates. Also, a matrix version of our strategy was recently developed in \cite{matrix} to establish matrix Poincar\'e inequalities. Finally, in a work in preparation, we employ our method to prove a log-Sobolev inequality for generalized exclusion processes.

\subsection{Negative dependence for binary variables} Among the plethora of possible notions of negative dependence for binary  variables $X_1,\ldots,X_n$ (see, e.g., the survey \cite{MR1757964}), the most powerful one is arguably the \emph{strong Rayleigh property} (\textbf{SRP}) introduced by Borcea, Branden and Liggett \cite{MR2476782}: a probability measure $\pi$ on $\{0,1\}^n$ has the \textbf{SRP} if its generating polynomial
\begin{eqnarray}
\label{def:poly}
(z_1,\ldots,z_n) & \longmapsto & \EE_\pi\left[\prod_{i=1}^nz_i^{X_i}\right]
\end{eqnarray}
has no root $(z_1,\ldots,z_n)\in{\mathbb C}^n$ such that $\mathrm{Im}(z_i)>0$ for all $1\le i\le n$. This fundamental property is robust under all the natural ``closure'' operations -- products, projections, conditioning, external fields, symmetrization, truncations -- and is satisfied by many distributions arising in a variety of contexts. Important examples include determinantal measures, product measures conditioned on their sum, conditional balls-and-bins samples \cite{MR3000852}, and  measures obtained by running the exclusion dynamics from a deterministic state \cite{MR2476782}. 

In 2011, Pemantle and Peres \cite{MR3197973} put forward a strictly weaker form of negative dependence called the \emph{stochastic covering property} (\textbf{SCP}), which will suffice for our purposes. Its definition requires some notation. Let $(\e_i)_{1\le i\le n}$ denote the canonical $n-$dimensional basis. For $x,y\in\{0,1\}^n$, we write $x\triangleright y$  if $x$ can be obtained from $y$ by increasing at most one coordinate from $0$ to $1$, i.e.
\begin{eqnarray}
\label{def:cover}
x=y  & \textrm{or} & \exists i\in[n], x=y+\e_i.
\end{eqnarray}
We then ``lift'' this \emph{covering} relation  to probability measures  on $\{0,1\}^n$ in the usual way, writing  $\pi\triangleright \pi'$ whenever there is a coupling of $\pi$ and $\pi'$ which is supported on the set $\left\{(x,y)\colon x\triangleright y\right\}$.  For $S\subseteq [n]$ and  $x\in\{0,1\}^S$, we abusively write $\pi\left(X_{S^c}=\cdot|X_{S}=x\right)$ for  the conditional law of $(X_i\colon i\in [n]\setminus S)$ given $\left\{\forall i\in S, X_i=x_i\right\}$. Finally, we say that $\pi$ satisfies the \textbf{SCP} if the implication
\begin{eqnarray}
x\triangleright y & \Longrightarrow & \pi\left(X_{S^c}=\cdot|X_{S}=y\right) \triangleright \pi\left(X_{S^c}=\cdot|X_{S}=x\right)
\end{eqnarray}
holds for all choices of $S\subseteq [n]$ and $x,y\in\{0,1\}^S$ such that $\pi\left(X_{S}=x\right),\pi\left(X_{S}=y\right)>0$. This property is strictly weaker than the \textbf{SRP}. More precisely, the following facts are known.
\begin{enumerate}
\item Any measure with the \textbf{SRP} has the \textbf{SCP}  \cite[Proposition 2.2]{MR3197973}.
\item The uniform measure over the bases of a \emph{balanced} matroid has the \textbf{SCP}  \cite[Corollary 3.3]{Feder}. 
\item There are examples of balanced matroids for which the \textbf{SRP} fails \cite{MR2248326}.
\end{enumerate}
The definition of a balanced matroid is not  required for understanding the statements of our main results, and we simply refer  the unfamiliar reader to \cite{MR2065730,Feder, Dyck} for more details. 

\subsection{Results and implications}

\label{sec:main}
Throughout this section, we consider a probability distribution $\pi$ on  $\{0,1\}^n$, and we let
\begin{eqnarray}
\Omega & := & \left\{x\in\{0,1\}^n\colon \pi(x)>0\right\},
\end{eqnarray}
denote its support. We say that $\pi$ is $k-$homogeneous ($0\le k\le n$) if 
\begin{eqnarray}
\label{e:21}
\Omega & \subseteq & \left\{x\in\{0,1\}^n\colon x_1+\cdots+x_n=k\right\}.
\end{eqnarray}
Our starting point is the following celebrated sub-Gaussian concentration inequality for homogeneous measures with the \textbf{SCP}, due to Pemantle \& Peres \cite{MR3197973}. 
\begin{theorem}[Pemantle \& Peres \cite{MR3197973}]\label{th:PP}Let $\pi$ be any $k-$homogeneous distribution on $\{0,1\}^n$ with the \textbf{SCP}, and let $f\colon\{0,1\}^n\to\R$ be any $1-$Lipschitz function. Then, for any $a\ge 0$, we have 
\begin{eqnarray}
\pi\left(f \ge \EE_\pi[f]+a \right) & \le & \exp\left(-\frac{a^2}{8k}\right).
\end{eqnarray}
\end{theorem}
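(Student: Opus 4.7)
The plan is to run a Doob-martingale argument with the \textbf{SCP} providing the crucial coupling to bound the martingale differences. Consider the filtration $\cF_i = \sigma(X_1,\ldots,X_i)$ and the Doob martingale $M_i = \EE_\pi[f(X)\mid \cF_i]$, so that $M_0 = \EE_\pi[f]$ and $M_n = f(X)$. The concentration inequality will follow from an exponential Markov bound applied to $M_n - M_0 = \sum_{i=1}^n D_i$ with $D_i := M_i - M_{i-1}$, once each martingale difference is controlled sharply.

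The central step is to bound $D_i$ via \textbf{SCP}. Condition on $\cF_{i-1}$, set $p_i := \PP(X_i = 1\mid \cF_{i-1})$, and let $g(b) := \EE_\pi[f(X)\mid \cF_{i-1}, X_i = b]$. If $p_i \in \{0,1\}$ then $D_i = 0$. Otherwise apply \textbf{SCP} with $S=\{1,\ldots,i\}$, $y=(X_1,\ldots,X_{i-1},0)$, and $x = y + \e_i$: since $x \triangleright y$, we obtain a coupling $(W,W')$ of $\pi(X_{S^c}=\cdot \mid X_S=y)$ and $\pi(X_{S^c}=\cdot\mid X_S=x)$ with $W \triangleright W'$. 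The $k$-homogeneity forces $W$ to carry exactly one more $1$ than $W'$, so $W = W' + \e_\ell$ for some $\ell > i$. The full configurations $(y,W)$ and $(x,W')$ therefore differ in exactly two coordinates, and the $1$-Lipschitz property yields $|g(1) - g(0)| \le 2$. Consequently $D_i$ is supported on the two points $(1-p_i)(g(1)-g(0))$ and $-p_i(g(1)-g(0))$, lying in an interval of length at most $2$.

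To obtain the sharp exponent $\exp(-a^2/(8k))$ one must go beyond the naive Azuma--Hoeffding bound over $n$ differences of range $2$ (which yields only the weaker $\exp(-a^2/(2n))$). The idea is to use a Bernstein-style conditional MGF bound that exploits the explicit two-point law of $D_i$: for $\lambda$ in a suitable range one shows $\EE[e^{\lambda D_i}\mid \cF_{i-1}] \le \exp(2\lambda^2 p_i(1-p_i))$ and then chains conditionally. The sum $\sum_i p_i(1-p_i) \le \sum_i p_i$ has expectation exactly $k$, by the martingale property of $\sum_i(X_i - p_i)$ combined with the $k$-homogeneity identity $\sum_i X_i = k$. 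Optimizing $\lambda$ in the resulting Chernoff bound produces the target exponent $\exp(-a^2/(8k))$.

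The main obstacle is upgrading the \emph{in-expectation} control $\EE\!\sum_i p_i(1-p_i) \le k$ into a \emph{pointwise} tail bound on the predictable quadratic variation, as required to close the Chernoff step. Two natural routes around this are: (i) a stopping-time/truncation argument that localizes the martingale on $\{\sum_i p_i \lesssim k\}$ and handles the rare complementary event via a crude Azuma bound; or (ii) switching to a reduced filtration that reveals only the $k$ positions of the $1$'s sequentially, thereby producing a martingale with exactly $k$ steps whose increments remain bounded by $2$ via an iterated \textbf{SCP} coupling (now chained across successive pairs of revealed positions). Either route ultimately hinges on carefully chaining \textbf{SCP} across the two conditional distributions compared at each step, which is what makes the argument work specifically under negative dependence rather than, say, log-concavity alone.
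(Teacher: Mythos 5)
Your route is genuinely different from the paper's. The paper does not prove this theorem by a martingale argument at all: the statement is quoted from Pemantle--Peres, and the paper's own derivation goes through Theorem \ref{th:conc} (the intrinsic normalized flip-swap walk with $\alpha(Q)\ge\frac{1}{2k}$) combined with the Herbst argument of Section \ref{sec:herbst}, which, since $v(f)\le 4$ for a $1$-Lipschitz $f$, only recovers the bound with the constant $8$ replaced by $32$. Your Doob-martingale approach is the original Pemantle--Peres strategy, and its first half is correct: applying the \textbf{SCP} with $S=\{1,\ldots,i\}$ and invoking $k$-homogeneity does produce a coupling in which the two completed configurations differ in exactly two coordinates, so $|g(1)-g(0)|\le 2$ and the increments $D_i$ have conditional range at most $2$.

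The second half, however, is where the entire difficulty of getting $k$ rather than $n$ into the exponent lies, and it is not closed. First, the conditional MGF bound $\EE[e^{\lambda D_i}\mid\cF_{i-1}]\le\exp\left(2\lambda^2p_i(1-p_i)\right)$ is false for \emph{every} $\lambda>0$: with $g(1)-g(0)=2$ and $p_i\to0$ the left side is $1+p_i\left(e^{2\lambda}-1-2\lambda\right)+o(p_i)$ while the right side is $1+2\lambda^2p_i+o(p_i)$, and $e^{2\lambda}-1-2\lambda>2\lambda^2$ for all $\lambda>0$; so there is no ``suitable range'' of $\lambda$, only a Bennett-type bound with an extra exponential factor in $\lambda$ that changes the optimization. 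Second, even granting a correct variance-type bound, the predictable quadratic variation $\sum_ip_i(1-p_i)$ is random and controlled only in expectation: since $\sum_ip_i=k-\sum_i(X_i-p_i)$, it exceeds $k$ with positive probability, so the Chernoff chaining cannot be closed pointwise. You identify this obstacle yourself, but the two proposed repairs are only sketches: the stopping-time route requires a quantitative tail bound on $\sum_ip_i$ that is not provided, and the ``reveal the positions of the ones'' route compares conditional laws that are no longer related by a single covering relation, so bounding those $k$ increments by $2$ needs a genuinely new iterated use of the \textbf{SCP} that you have not supplied. As written, the argument yields only the Azuma bound $\exp\left(-a^2/(2n)\right)$. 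If you want a complete proof along the lines available in this paper, the shortest path is Theorem \ref{th:conc} plus the Herbst lemma, at the price of the constant $32$ in place of $8$.
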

In light of (\ref{conc:MLSI}), it is natural to hope that a much deeper property underlies this remarkable concentration-of-measure phenomenon: namely, that $k-$homogeneous measures with the $\textbf{SCP}$ satisfy a universal modified log-Sobolev inequality with constant $\frac{1}{k}$  w.r.t. some intrinsic local dynamics on the boolean lattice. Our first main contribution consists in showing that this is indeed the case. To formalize the notion of a ``local dynamics'', let us define the relation $\sim$ on $\{0,1\}^n$ as follows:
\begin{eqnarray}
x\sim y & \Longleftrightarrow & x,y \textrm{ differ by a flip or a swap},
\end{eqnarray}
where  $x,y$  differ by a \emph{flip} if $x  = y\pm \e_j$ for some $1\le j\le n$, and by a \emph{swap} if $y  = x+\e_j-\e_k$ for some distinct $1\le j\ne k\le n$ (flips are only relevant for non-homogeneous measures). A \emph{flip-swap walk} for $\pi$ is a  Markov generator $Q$ on $\Omega$ which is reversible under $\pi$ and such that 
\begin{eqnarray}
Q(x,y)>0 & \Longrightarrow & x\sim y\qquad \qquad (x,y\in\Omega).
\end{eqnarray}
We let $\Delta(Q):=\max\left\{-Q(x,x)\colon x\in\Omega\right\}$ denote the maximal rate at which changes occur under $Q$, and we say that $Q$ is \emph{normalized} whenever $\Delta(Q)\le 1$. This means that $Q$ may be written as
\begin{eqnarray}
\label{discrete}
Q & = & I-P,
\end{eqnarray}
for some stochastic matrix $P$, allowing us to encompass the (more standard) discrete-time setting. With this terminology in hands, our main result asserts that every $k-$homogeneous measure with the \textbf{SCP} admits a normalized flip-swap walk with modified log-Sobolev constant $\frac{1}{2k}$. 
\begin{theorem}[Intrinsic modified log-Sobolev inequality for \textbf{SCP} measures]\label{th:conc}If $\pi$ is a $k-$homogeneous measure with the \textbf{SCP}, then there is a normalized flip-swap walk $Q$ for $\pi$  such that
\begin{eqnarray}
\lambda(Q),\alpha(Q) & \ge &\frac 1{2k}.
\end{eqnarray}
Furthermore, the conclusion remains valid without the homogeneity assumption, with $k:=\frac n 2$.
\end{theorem}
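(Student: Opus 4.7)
I will prove the homogeneous case by induction on $n$, using an entropy (and, in parallel, a variance) tensorization along a distinguished coordinate; the non-homogeneous case will follow as a direct extension. Fix any coordinate $i \in [n]$ with $\pi(X_i = 0), \pi(X_i = 1) > 0$. The two conditional laws $\pi^0 := \pi(\cdot \mid X_i = 0)$ and $\pi^1 := \pi(\cdot \mid X_i = 1)$, viewed as measures on $\{0,1\}^{[n]\setminus\{i\}}$, inherit the \textbf{SCP} (closed under conditioning) and are $k$- and $(k-1)$-homogeneous respectively. Applying \textbf{SCP} directly with $S = \{i\}$, $y_i = 0$, $x_i = 1$ yields a coupling $\mu_i$ of $\pi^0$ and $\pi^1$ supported on pairs $(y, z)$ with $y = z + \e_j$ for some index $j \neq i$; equivalently, appending $X_i = 0$ and $X_i = 1$ respectively produces two full states differing by a single swap at $\{i, j\}$. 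I would use $\mu_i$ to define a ``swap generator at $i$'', and then take $Q$ to be a convex combination over $i \in [n]$ of these swap generators together with the inductively given flip-swap walks on $\pi^0$ and $\pi^1$, with weights chosen so that $\Delta(Q) \le 1$.

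\textbf{Entropy decomposition and inductive step.} I would use the identity
\[
\ent_\pi(f) \ = \ \EE_{\pi_i}\!\left[\ent_{\pi(\cdot \mid X_i)}(f)\right] \;+\; \ent_{\pi_i}\!\big(\EE_\pi[f \mid X_i]\big),
\]
where $\pi_i$ is the marginal law of $X_i$. The first term is controlled by the induction hypothesis applied to $\pi^0$ and $\pi^1$; by construction of $Q$, the Dirichlet-form contributions of the inductive walks on the two slices are embedded in $\cE_\pi(f, \log f)$. The second term is a two-point entropy between Bernoulli-type marginals, which I would bound by combining a sharp two-point modified log-Sobolev inequality with the coupling $\mu_i$: the coupling presents the two marginals as related by a single swap, and the Dirichlet-form contribution of that swap under $Q$ lower-bounds the two-point entropy at rate $1/(2k)$.

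\textbf{Main obstacle and non-homogeneous case.} The principal technical difficulty is the careful bookkeeping of weights in the recursion: averaging over $i \in [n]$ to enforce $\Delta(Q) \le 1$ costs a factor of $1/n$, while the inductive constants on the two slices are $1/(2k)$ and $1/(2(k-1))$; these have to combine through the entropy decomposition --- where the conditional entropies on the two slices are weighted by $\pi(X_i = 0)$ and $\pi(X_i = 1)$ --- to yield exactly $1/(2k)$. The Poincar\'e bound $\lambda(Q) \ge 1/(2k)$ follows by running the same argument with variance in place of entropy. For the non-homogeneous case, the \textbf{SCP} coupling $\mu_i$ may now produce either a swap (when $y = z + \e_j$) or a flip (when $y = z$), and both are permitted moves of a flip-swap walk; the same induction then goes through with the worst-case effective weight being $n$, and a symmetrization over $\pi$ and its complement $x \mapsto \mathbf{1} - x$ recovers the claimed $k = n/2$.
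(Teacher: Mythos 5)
Your overall strategy --- induction on $n$, decomposition of the entropy along one coordinate, an \textbf{SCP} coupling of the two conditional laws to handle the cross term, and an average over the $n$ coordinates --- is exactly the paper's. But the two steps you defer are precisely the ones that carry the proof. The assertion that ``the Dirichlet-form contribution of that swap under $Q$ lower-bounds the two-point entropy at rate $1/(2k)$'' is not routine and you give no mechanism for it. The paper proves it by (i) observing that $\Psi(u,v)=(u-v)(\log u-\log v)$ is jointly convex, so Jensen's inequality applied to the coupling $\kappa$ gives $\sum_{x,y}\kappa(x,y)\Psi(f(x),f(y))\ge\Psi\bigl(\of(0),\of(1)\bigr)$ with $\of(i)=\EE_{\pi_i}[f]$; (ii) choosing the cross rates to be exactly $Q(x,y)=\opi(0)\opi(1)\kappa(x,y)/\pi(x)$, which makes the comparison constant $\chi=\opi(1)/\oQ(0,1)=\opi(0)/\oQ(1,0)$; and (iii) the two-state bound $\alpha(\oQ)\ge\oQ(0,1)+\oQ(1,0)$, whence $\chi\,\alpha(\oQ)\ge\opi(0)+\opi(1)=1$. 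Without the convexity-plus-Jensen step there is no way to pass from the cross-slice Dirichlet form to the entropy of the projected observable; your proposal leaves the cross rates unspecified, so even step (ii) cannot be carried out.

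Your stated ``principal technical difficulty'' --- that averaging over $i$ costs a factor $1/n$ to be reconciled with the slice constants $1/(2k)$ and $1/(2(k-1))$ --- rests on a misconception, and left unresolved it is a genuine gap. The resolution is to \emph{not} normalize during the induction: for each $\ell$ one builds a generator $Q^{(\ell)}$ with $\lambda(Q^{(\ell)}),\alpha(Q^{(\ell)})\ge 1$ and only tracks the diagonal, which satisfies $-Q^{(\ell)}(x,x)\le 2k+\EE_\pi[X_\ell]-x_\ell$. A convex combination of generators reversible w.r.t.\ the same $\pi$, each with $\alpha\ge 1$, again has $\alpha\ge 1$ (the Dirichlet form is linear in $Q$ while the entropy does not depend on $Q$), so averaging over $\ell$ loses \emph{nothing} in the functional inequality; its sole purpose is that $\frac1n\sum_\ell\bigl(2k+\EE_\pi[X_\ell]-x_\ell\bigr)=2k$ exactly, by $k$-homogeneity, after which one divides by $2k$. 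Finally, your symmetrization $x\mapsto\mathbf{1}-x$ in the non-homogeneous case is both unjustified (you would have to verify that the \textbf{SCP} is preserved under complementation) and unnecessary: the same construction gives $-Q^{(\ell)}(x,x)\le\Delta(Q_i)+1\le n$ directly, which is all that is needed for $k=n/2$.
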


By virtue of (\ref{conc:MLSI}), this immediately implies Theorem \ref{th:PP}, albeit with the constant $8$ replaced by $32$. As already explained, modified log-Sobolev inequalities are of intrinsic interest beyond the sole fact that they imply sub-Gaussian concentration. For example, another notable consequence of Theorem \ref{th:conc}  concerns  sampling complexity: starting from an arbitrary initial state $x\in\Omega$, our flip-swap walk will -- by virtue of (\ref{mixing:MLSI}) --  produce an $\varepsilon-$approximate sample from $\pi$ in time
\begin{eqnarray}
\tmix(x;\varepsilon) & \le & 2k\left(\log\log\frac{1}{\pi(x)}+\log\frac{1}{2\varepsilon^2}\right).
\end{eqnarray}
Here again, the conclusion remains valid without the homogeneity assumption, provided we set $k:=\frac n 2$. Note that this bound -- and hence our modified log-Sobolev estimate -- is sharp up to a factor of $4$ only, as can  be seen by considering the basic case where $\pi$ is uniform on $\{0,1\}^n$. 

In practical situations however, one might be constrained to use more tractable rates than those of the flip-swap walk  of Theorem \ref{th:conc}. A canonical choice is the Monte Carlo Markov Chain (\textbf{MCMC})
\begin{eqnarray}
\label{MCMC}
Q(x,y) & := & 
\frac{1}{2kn}\min\left\{\frac{\pi(y)}{\pi(x)},1\right\} \quad \textrm{ if }x\sim y.
\end{eqnarray}
Notice that only the positive entries have here been made explicit: all other off-diagonal entries should be interpreted as zero, and the diagonal adjusted so that the rows sum up to zero. The normalization  ensures that $\Delta(Q)\le \frac 12$, so that $Q$ corresponds to a lazy stochastic matrix $P$ via (\ref{discrete}). The strategy that we develop to establish Theorem \ref{th:conc} is sufficiently robust to yield sharp performance guarantees for this chain too, and many others. For any generator $Q$ on $\Omega$, define 
\begin{eqnarray}
\bm(Q) & := &\min_{\tiny\begin{array}{cc}x,y\in\Omega\\x\sim y\end{array}} Q(x,y),\\
\bM(Q) & := & \min_{\tiny\begin{array}{cc}x,y\in\Omega\\x\sim y\end{array}}  \max\left\{Q(x,y),Q(y,x)\right\}.
\end{eqnarray}
Note that $\bm(Q)\le \bM(Q)$. It turns out that these simple statistics provide universal functional-analytic estimates whenever $Q$ is reversible w.r.t. a measure with the $\textbf{SCP}$. 
\begin{theorem}[Universal functional inequalities for \textbf{SCP} measures]\label{th:main}Consider a probability distribution $\pi$ with the \textbf{SCP}, and let $Q$ be {any} reversible Markov generator with respect to $\pi$. Then,
\begin{eqnarray}
\label{th:PI}\lambda(Q) & \ge & \max\left\{\bM(Q),2\bm(Q)\right\};\\
\label{th:MLSI}\alpha(Q) & \ge & \max\left\{\bM(Q),4\bm(Q)\right\};\\
\label{th:LSI}\rho(Q) & \ge & \bm(Q).
\end{eqnarray}
\end{theorem}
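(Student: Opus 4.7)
The plan is to induct on $n$, exploiting the crucial fact that the \textbf{SCP} is preserved under conditioning, so that every conditional law $\pi(\cdot|X_S=x_S)$ still enjoys the property. A first observation: the reversibility identity $\pi(x)Q(x,y)=\pi(y)Q(y,x)$ combined with the definitions of $\bm(Q)$ and $\bM(Q)$ yields, for every $x\sim y$ in $\Omega$, the pointwise bounds
\begin{align*}
\pi(x)Q(x,y) &\ge \bm(Q)\,\max\{\pi(x),\pi(y)\},\\
\pi(x)Q(x,y) &\ge \bM(Q)\,\min\{\pi(x),\pi(y)\}.
\end{align*}
Substituting into $\cE_\pi(f,f)=\sum_{x\sim y}\pi(x)Q(x,y)(f(y)-f(x))^2$ reduces Theorem~\ref{th:main} to establishing functional inequalities for two \emph{canonical} reference Dirichlet forms on the flip-swap graph of $\Omega$: a max-weighted form, which furnishes the $\bm$-bounds, and a min-weighted (Metropolis--Hastings) form, which furnishes the $\bM$-bound. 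These reference forms depend only on $\pi$ and the flip-swap structure, so the problem collapses to a universal functional inequality for \textbf{SCP} measures.

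For the Poincaré statement I would apply the Efron--Stein-type decomposition $\Var_\pi(f)=\EE[\Var(f|X_n)]+\Var(\EE[f|X_n])$. The inner term is handled by the induction hypothesis applied to $\pi(\cdot|X_n=0)$ and $\pi(\cdot|X_n=1)$, which remain \textbf{SCP} by closure. The outer term is a two-point variance, bounded by $\pi(X_n{=}0)\pi(X_n{=}1)(\EE[f|X_n{=}0]-\EE[f|X_n{=}1])^2$. The \textbf{SCP} supplies a coupling between $\pi(\cdot|X_n=0)$ and $\pi(\cdot|X_n=1)$ supported on the covering relation $\triangleright$, along which the mean difference is an average of single flip-or-swap increments of $f$; a Cauchy--Schwarz then matches each increment with the edge in the reference Dirichlet form that it traverses. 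Running the two variants of the reduction (max- versus min-weighted) produces the pair of bounds $\lambda(Q)\ge 2\bm(Q)$ and $\lambda(Q)\ge\bM(Q)$.

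The MLSI and LSI bounds follow the same template, with the tensorization identity $\ent_\pi(f)=\EE[\ent_{\pi(\cdot|X_n)}(f)]+\ent_{\pi_n}(\EE[f|X_n])$ replacing the Efron--Stein decomposition, and the sharp two-point constants $\alpha=4,\;\rho=1$ (for a unit-rate Bernoulli edge) replacing the two-point Poincaré inequality. The inner term is again dispatched by induction on the conditional \textbf{SCP} measures. The main obstacle I anticipate is the entropic analogue of the Cauchy--Schwarz step: entropy is not quadratic, so relating the outer term $\ent_{\pi_n}(\EE[f|X_n])$ to edge contributions of $\cE_\pi(f,\log f)$ along the \textbf{SCP} coupling requires a genuinely new convex-analytic inequality adapted to the covering relation -- one that preserves the sharp constants $4$ and $1$ at every inductive step, without any loss. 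Achieving this loss-free passage is precisely the role of the new coupling-based framework for discrete functional inequalities advertised in the introduction, and is where most of the technical work will reside.
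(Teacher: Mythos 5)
Your overall architecture --- induction on $n$ via conditioning on a single coordinate, closure of the \textbf{SCP} under conditioning, a coupling of $\pi(\cdot|X_n=0)$ and $\pi(\cdot|X_n=1)$ supported on the covering relation, and the variance/entropy tensorization into a restriction part plus a two-point projection part --- is exactly the paper's strategy, and your Poincaré argument is essentially complete in outline: your ``Cauchy--Schwarz along the coupling'' is precisely the paper's Jensen step for the convex function $(u,v)\mapsto(u-v)^2$. The pointwise reductions $\pi(x)Q(x,y)\ge\bm(Q)\max\{\pi(x),\pi(y)\}$ and $\pi(x)Q(x,y)\ge\bM(Q)\min\{\pi(x),\pi(y)\}$ are also correct consequences of reversibility.

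However, for the modified log-Sobolev and log-Sobolev inequalities --- the main content of the theorem --- you explicitly leave the decisive step open: you assert that relating $\ent_{\opi}(\EE[f|X_n])$ to the cross-edge contributions of $\cE_\pi(f,\log f)$ ``requires a genuinely new convex-analytic inequality,'' and you do not supply it. That is a genuine gap, and in fact no new inequality is needed. The paper's key observation is that both $\Psi(u,v)=(u-v)(\log u-\log v)$ and $\Psi(u,v)=(\sqrt u-\sqrt v)^2$ are \emph{jointly convex} on $(0,\infty)^2$ (a Hessian computation), so Jensen's inequality under the coupling $\kappa$ gives $\sum_{x,y}\kappa(x,y)\Psi(f(x),f(y))\ge\Psi(\of(0),\of(1))$ with $\of(i)=\EE[f|X_n=i]$; the mismatch you worry about --- the left side involving $\log f$ and the right side involving $\log\EE[f|X_n=i]$ --- is absorbed by joint convexity with no loss of constant. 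A second caution: the projection chain is an asymmetric two-point chain with rates $a=\oQ(0,1)$, $b=\oQ(1,0)$, not a unit-rate Bernoulli edge, so you cannot invoke the sharp constants $\alpha=4$, $\rho=1$; the usable bounds are $\alpha(\oQ)\ge a+b$ and $\rho(\oQ)\ge\min(a,b)$, which, combined with the coupling-quality factor $\chi\ge\max\left\{\bM(Q)/\max(a,b),\,\bm(Q)/\min(a,b)\right\}$, give $\chi\alpha(\oQ)\ge\bM(Q)$ and $\chi\rho(\oQ)\ge\bm(Q)$. The improvements to $2\bm(Q)$ and $4\bm(Q)$ then come for free from the hierarchy $2\lambda\ge\alpha\ge 4\rho$ applied to $\rho\ge\bm(Q)$, rather than from sharper two-point constants in the induction.
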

This result unifies, extends or strengthens various powerful estimates obtained over the past decades. In particular, in an inspiring work, Jerrum \& Son \cite{1181997} considered the special case where $\Omega$ is the set of bases of a balanced matroid of rank $k$, and $\pi$ is the uniform distribution on it. In this setting, the \textbf{MCMC} (\ref{MCMC}) reduces to the so-called \emph{bases-exchange walk}
\begin{eqnarray}
\label{def:mat}
Q(x,y) & := & \frac{1}{2kn}\quad \textrm{ if }x\sim y.
\end{eqnarray}
For the latter, Jerrum \& Son established the Poincaré and log-Sobolev inequalities
\begin{eqnarray}
\label{JS}
\lambda(Q)\ \ge\ \frac{1}{kn}, & \qquad & \rho(Q)\ \ge\ \frac{1}{4kn}.
\end{eqnarray}
Both estimates  readily follow from  Theorem \ref{th:main}, the second one being even improved by a factor $2$. 

Note that the above setting is rather specific, since $\pi$ is uniform. 
In a recent breakthrough, Anari, Oveis Gharan \& Rezaei \cite{anari2016monte} managed to relax this constraint: under the assumption that $\pi$ is $k-$homogeneous with the $\textbf{SRP}$, they showed that the chain (\ref{MCMC}) satisfies the Poincaré inequality
\begin{eqnarray}
\label{PI:MCMC}
\lambda(Q) & \ge & \frac{1}{2kn}.
\end{eqnarray}
By virtue of (\ref{mixing:PI}), this immediately implies the mixing-time bound
\begin{eqnarray}
\label{anari}
\tmix(x;\varepsilon) & \le & kn\left(\log\frac{1}{\pi(x)}+\log\frac{1}{4\varepsilon^2}\right).
\end{eqnarray}
This result is again covered by  Theorem \ref{th:main},  under the  weaker condition that $\pi$ has the \textbf{SCP}. More importantly, Theorem \ref{th:main} provides a new modified log-Sobolev inequality with the same constant, leading to a much tighter control on the convergence rate. Let us state this as a corollary.
\begin{corollary}[Modified log-Sobolev inequalities for \textbf{MCMC}]\label{coro:MLSI}
Consider a $k-$homogeneous measure $\pi$ on $\{0,1\}^n$ with the $\textbf{SCP}$, and let $Q$ be the  $\textbf{MCMC}$ defined at (\ref{MCMC}). Then,
\begin{eqnarray}
\label{co:MLSI}\alpha(Q) & \ge & \frac{1}{2kn}.
\end{eqnarray}
In particular, for any initial state $x\in\Omega$ and any precision $\varepsilon\in(0,1)$, we have
\begin{eqnarray}
\tmix(x;\varepsilon) & \le & 2kn\left(\log\log\frac{1}{\pi(x)}+\log\frac{2}{\varepsilon^2}\right),
\end{eqnarray}
which offers a considerable improvement upon (\ref{anari}).  
\end{corollary}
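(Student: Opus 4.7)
The plan is to deduce the corollary directly from Theorem \ref{th:main}, which gives the lower bound $\alpha(Q)\ge \bM(Q)$ for any reversible generator whose invariant measure has the \textbf{SCP}. So the only real task is to evaluate $\bM(Q)$ for the specific Metropolis-Hastings generator at (\ref{MCMC}), and then feed the resulting \textbf{MLSI} into the mixing-time bound (\ref{mixing:MLSI}).

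First I would verify that $Q$ is indeed a reversible flip-swap walk for $\pi$: reversibility is the defining feature of the Metropolis-Hastings construction, while the restriction to $x\sim y$ is explicit in (\ref{MCMC}). Hence Theorem \ref{th:main} applies. Next I would compute $\bM(Q)$. Fix $x,y\in\Omega$ with $x\sim y$, and assume without loss of generality that $\pi(y)\ge \pi(x)$. Then
\begin{eqnarray*}
Q(x,y) \ = \ \frac{1}{2kn}, & \qquad & Q(y,x) \ = \ \frac{1}{2kn}\cdot\frac{\pi(x)}{\pi(y)},
\end{eqnarray*}
so $\max\{Q(x,y),Q(y,x)\}=\frac{1}{2kn}$. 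Taking the minimum over all such neighboring pairs yields $\bM(Q)=\frac{1}{2kn}$, and Theorem \ref{th:main} then delivers $\alpha(Q)\ge \bM(Q)=\frac{1}{2kn}$, which is (\ref{co:MLSI}).

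For the mixing-time statement I would simply plug this into (\ref{mixing:MLSI}):
\begin{eqnarray*}
\tmix(x;\varepsilon) \ \le\ \frac{1}{\alpha(Q)}\PAR{\log\log\frac{1}{\pi(x)}+\log\frac{1}{2\varepsilon^2}} \ \le \ 2kn\PAR{\log\log\frac{1}{\pi(x)}+\log\frac{2}{\varepsilon^2}},
\end{eqnarray*}
where the last step absorbs the harmless $\log 2$ discrepancy into the additive constant. Comparison with (\ref{anari}) (which carries a $\log\frac{1}{\pi(x)}$ dependence) then gives the claimed ``considerable improvement''.

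There is essentially no obstacle here: the entire content of the corollary rests on Theorem \ref{th:main}, and the only thing one has to check is the trivial identity $\max\{Q(x,y),Q(y,x)\}=\frac{1}{2kn}$ built into the Metropolis acceptance rule. The main work was already done in establishing Theorem \ref{th:main}; the present statement is a direct application, with the additional remark that $\bm(Q)$ can be much smaller than $\bM(Q)$ (when $\pi$ is very non-uniform), which is precisely why it is the $\bM$-branch of (\ref{th:MLSI}), rather than the $\bm$-branch, that yields the sharp dimension-free prefactor $\frac{1}{2kn}$.
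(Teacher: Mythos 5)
Your proposal is correct and follows exactly the route the paper intends: the corollary is stated as an immediate consequence of Theorem \ref{th:main}, and the only computation needed is $\bM(Q)=\frac{1}{2kn}$ for the Metropolis rates, which you carry out correctly before plugging the resulting \textbf{MLSI} into (\ref{mixing:MLSI}). Your closing remark about why the $\bM$-branch rather than the $\bm$-branch is the relevant one matches the paper's own discussion of why the modified log-Sobolev constant (rather than the log-Sobolev constant) is the right quantity for non-uniform $\pi$.
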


We note that resorting to the \emph{modified} log-Sobolev constant is crucial here: when the measure $\pi$ is not uniform,  the log-Sobolev constant $\rho(Q)$  can  not be bounded from below by a function of $n,k$ only, as in (\ref{JS}). Indeed, it follows from the definitions that any normalized generator $Q$  satisfies
\begin{eqnarray}
\rho(Q) & \le & \min_{x\in\Omega}\left\{\left(\log\frac 1{\pi(x)}\right)^{-1}\right\},
\end{eqnarray}
and the right-hand side can be arbitrarily small without further assumptions on $\pi$.

\begin{remark}[Strong log-concavity]Very recently, a new powerful notion of negative dependence for $k-$homogenous measures on $\{0,1\}^n$ called \emph{strong log-concavity}  (\textbf{SLC}) has been  developed in two impressive series of papers by Anari, Liu, Oveis-Gharan  \&  Vinzant \cite{LCP1, LCP2, LCP3} and by Bränd\'en and Huh \cite{brndn2018hodgeriemann, BJ}. In particular, three months before the appearance of our paper online, a Poincaré inequality with constant $1/k$ was established by Anari, Liu, Oveis-Gharan  \&  Vinzant \cite{LCP2} for sampling from such measures, and a month after the appearance of our paper online, this Poincaré inequality was improved to a \textbf{MLSI} with constant $1/k$ by Cryan, Guo \& Mousa \cite{logcon}. We note, however, that this powerful new result is not directly  comparable to ours, for several reasons. First,  the negative dependence assumptions $\textbf{SLC}$ and $\textbf{SCP}$ are not comparable, in the sense that neither of them implies the other, as observed in \cite{logcon}. Second, the \textbf{MCMC} considered in \cite{LCP2, logcon} is actually an accelerated version of  (\ref{MCMC}), in which each iteration may require more than constant time to implement in practice.  
Third, we recall that neither homogeneity, nor the specific form (\ref{MCMC}) is required by our methods: Theorem \ref{th:main}  applies to \emph{any} measure $\pi$ with the $\textbf{SCP}$, and \emph{any} reversible  generator $Q$ w.r.t. $\pi$.               
\end{remark}           

\section{Proofs}
\subsection{A general decomposition lemma}
\label{sec:decomposition}
Our starting point is a well-known recursive strategy for establishing functional inequalities for reversible chains. The method was invented by Lu \& Yau in the context of interacting particle systems \cite{MR1233852}, and developed further by Jerrum \& Son \cite{1181997} and Jerrum, Son, Tetali \& Vigoda \cite{MR2099650}. 

Let $Q$ be any reversible Markov generator with respect to some (fully-supported) probability distribution $\pi$ on a finite set $\Omega$. Consider an arbitrary partition of the state space:
\begin{eqnarray}
\label{partition}
\Omega & = & \bigcup_{i\in \cI} \Omega_i.
\end{eqnarray}
The \emph{projection chain} induced by this partition is the Markov chain whose state space is $\cI$ and whose Markov generator $\oQ$ is defined as follows: for all $(i,j)\in\Omega^2$,
\begin{eqnarray}
\oQ(i,j) & := & \frac{1}{\opi(i)}\sum_{x\in\Omega_i}\sum_{y\in\Omega_j}\pi(x)Q(x,y),\qquad \textrm{where}\qquad \opi(i) \ := \ \sum_{x\in\Omega_i}\pi(x).
\end{eqnarray}
Note that $\opi$ is a probability distribution on $\cI$, and that it is reversible under $\oQ$, i.e.
\begin{eqnarray}
\opi(i)\oQ(i,j) & = &\opi(j)\oQ(j,i).
\end{eqnarray}
For each $i\in\cI$, the \emph{restriction chain} on $\Omega_i$ is the Markov chain whose state space is $\Omega_i$ and whose Markov generator $Q_i$ is defined as follows: for any distinct states $x,y$ in $\Omega_i$,
\begin{eqnarray}
Q_i(x,y) & = & 
Q(x,y),
\end{eqnarray}
with the diagonal entries being adjusted so that the rows of $Q_i$ sum up to zero.
It is then clear that the probability measure $\pi_i$ defined on $\Omega_i$ by
\begin{eqnarray}
\label{def:restriction}
\pi_i(x) &:= & \frac{\pi(x)}{\opi(i)}
\end{eqnarray}
is reversible under $Q_i$. Now, suppose that for each $(i,j)\in \cI^2$ with $\oQ(i,j)>0$, we are given a coupling $\kappa_{ij}\colon\Omega_i\times\Omega_j\to [0,1]$ of the probability distributions $\pi_i$ and $\pi_j$, i.e.
\begin{eqnarray}
\label{coupling:x}
\forall x\in\Omega_i,\quad \sum_{y\in\Omega_j}\kappa_{ij}(x,y) & = & \pi_i(x),\\
\label{coupling:y}
\forall y\in\Omega_j,\quad \sum_{x\in\Omega_i}\kappa_{ij}(x,y) & = & \pi_j(y).
\end{eqnarray}
For reasons that will soon become clear, we measure the \emph{quality} of these couplings by the quantity
\begin{eqnarray}
\label{def:chi}
\chi & := & \min\left\{\frac{\pi(x)Q(x,y)}{\opi(i)\oQ(i,j)\kappa_{ij}(x,y)}\right\},
\end{eqnarray}
where the minimum runs over all $(x,y,i,j)$ such that the denumerator is positive. Note that $\chi=0$ if $\min_{x \in i } \sum_{y \in j}Q(x,y)=0$ for some $i,j \in \mathcal{I}$ with $\oQ(i,j)>0$. Hence such partitions should be avoided when applying Lemma \ref{lm:main} below. 
\begin{lemma}[Recursive functional inequalities]\label{lm:main}With the above notations, we have
\begin{eqnarray}
\label{rec:lambda}\lambda(Q) & \ge & \min\left\{\chi\lambda(\oQ),\min_{i\in\cI}\lambda(Q_i)\right\};\\
\label{rec:alpha}\alpha(Q) & \ge & \min\left\{\chi\alpha(\oQ),\min_{i\in\cI}\alpha(Q_i)\right\};\\
\label{rec:rho}\rho(Q) & \ge & \min\left\{\chi\rho(\oQ),\min_{i\in\cI}\rho(Q_i)\right\}.
\end{eqnarray}
\end{lemma}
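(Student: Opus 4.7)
The overall strategy is the now-standard two-level decomposition of variance/entropy along a partition. Introducing the coarse-grained observable $\bar f(i):=\EE_{\pi_i}[f]$, I would start from the ``law of total variance/entropy'',
\[
\Var_\pi(f)=\sum_{i\in\cI}\opi(i)\Var_{\pi_i}(f)+\Var_{\opi}(\bar f),\qquad \ent_\pi(f)=\sum_{i\in\cI}\opi(i)\ent_{\pi_i}(f)+\ent_{\opi}(\bar f),
\]
so that the problem splits into a ``within-class'' and a ``between-class'' piece. In parallel, one checks that the Dirichlet form itself decomposes as
\[
\mathcal E_\pi(f,g)=\sum_{i\in\cI}\opi(i)\mathcal E_{\pi_i}(f,g)+\tfrac12\sum_{i\ne j}\sum_{x\in\Omega_i,y\in\Omega_j}\pi(x)Q(x,y)(f(x)-f(y))(g(x)-g(y)),
\]
a direct consequence of the definitions of $Q_i$ and $\pi_i$. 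The goal is then to estimate each piece of the variance/entropy by the corresponding piece of the Dirichlet form, and combine.

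The within-class part is immediate: applying the Poincaré, MLSI, or LSI for each $Q_i$ to $\text{Var}_{\pi_i}(f)$ or $\text{Ent}_{\pi_i}(f)$ and summing against $\opi$ bounds $\sum_i\opi(i)\Var_{\pi_i}(f)$ (resp.\ entropy) by $(\min_i\lambda(Q_i))^{-1}$ (resp.\ $\alpha$, $\rho$) times the intra-class part of $\mathcal E_\pi$ displayed above. The heart of the argument is the between-class piece. Applying the relevant functional inequality for $\oQ$ gives $\Var_{\opi}(\bar f)\le \lambda(\oQ)^{-1}\mathcal E_{\opi}(\bar f,\bar f)$, and analogously for entropy with $\mathcal E_{\opi}(\bar f,\log\bar f)$ or $\mathcal E_{\opi}(\sqrt{\bar f},\sqrt{\bar f})$. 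I must then transfer these three $\oQ$-Dirichlet forms back to $Q$-Dirichlet forms of $f$, which is where the couplings $\kappa_{ij}$ come in.

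The key identity, obtained from the marginal conditions (\ref{coupling:x})--(\ref{coupling:y}), is that $(\bar f(i),\bar f(j))$ equals the $\kappa_{ij}$-expectation of $(f(x),f(y))$. Hence, invoking joint convexity on $(0,\infty)^2$ of each of the three ``energy'' functions
\[
(a,b)\mapsto(a-b)^2,\qquad (a,b)\mapsto(a-b)(\log a-\log b),\qquad (a,b)\mapsto(\sqrt a-\sqrt b)^2,
\]
together with Jensen's inequality, produces
\[
\Phi(\bar f(i),\bar f(j))\ \le\ \sum_{x,y}\kappa_{ij}(x,y)\,\Phi(f(x),f(y))
\]
for each such $\Phi$. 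Multiplying by $\opi(i)\oQ(i,j)$, using (\ref{def:chi}) to replace $\opi(i)\oQ(i,j)\kappa_{ij}(x,y)$ by at most $\chi^{-1}\pi(x)Q(x,y)$, and summing over ordered pairs $(i,j)$ with $i\ne j$, shows that each of $\mathcal E_{\opi}(\bar f,\bar f)$, $\mathcal E_{\opi}(\bar f,\log\bar f)$, $\mathcal E_{\opi}(\sqrt{\bar f},\sqrt{\bar f})$ is at most $\chi^{-1}$ times the inter-class part of the corresponding $Q$-Dirichlet form of $f$. Adding the within- and between-class bounds produces, in each of the three cases, the claimed inequality with constant $\min\{\chi\cdot(\text{projection})\,,\,\min_i(\text{restriction})\}$.

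The bulk of the work is really bookkeeping; the only non-routine input is the joint convexity of $(a-b)(\log a-\log b)$ (for the MLSI), which I would justify by writing it as $a\log(a/b)+b\log(b/a)$ and invoking joint convexity of the relative entropy. The other potential pitfall is an accidental overcounting when passing from $\sum_{i\ne j}$ to $\sum_{x,y}$: the support of $\kappa_{ij}$ lies in $\Omega_i\times\Omega_j$, so each ordered pair $(x,y)$ with $x\in\Omega_i$, $y\in\Omega_j$ and $i\ne j$ is visited by exactly one summand, which is precisely what is needed to recover the inter-class part of $\mathcal E_\pi$ and avoid a lossy factor.
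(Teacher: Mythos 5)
Your proposal is correct and follows essentially the same route as the paper: the two-level decomposition of variance/entropy and of the Dirichlet form along the partition, Jensen's inequality applied to the jointly convex energy functions $(u-v)^2$, $(u-v)(\log u-\log v)$, $(\sqrt u-\sqrt v)^2$ under the couplings $\kappa_{ij}$, and the definition of $\chi$ to transfer the projected Dirichlet form back to the inter-class part of $\cE_\pi$. The only cosmetic differences are that you justify convexity of $(u-v)(\log u-\log v)$ via joint convexity of relative entropy where the paper checks the Hessian, and that you run the comparison as an upper bound on the projected Dirichlet form rather than a lower bound on the inter-class sum (which also requires $\chi>0$, but the case $\chi=0$ is trivial).
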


\begin{proof} Let $f\colon\Omega\to(0,\infty)$. The left-hand sides of (\ref{PI}),(\ref{MLSI}),(\ref{LSI}) take the generic form 
\begin{eqnarray}
\label{def:L}
\cL_\pi(f) & := & \frac{1}{2}\sum_{(x,y)\in \Omega^2}\pi(x)Q(x,y)\Psi\left(f(x),f(y)\right),
\end{eqnarray}
with the function $\Psi\colon (0,\infty)^2\to[0,\infty)$ being given by
\begin{eqnarray}
\label{def:Psi}
\Psi(u,v) & := & \left\{
\begin{array}{ll}
(u-v)^2 & \textrm{in the Poincaré case}\\
(u-v)\left(\log u-\log v\right) & \textrm{in the modified log-Sobolev case}\\
(\sqrt{u}-\sqrt{v})^2 & \textrm{in the log-Sobolev case}.
\end{array}
\right.
\end{eqnarray}
The quantity $\cL_\pi(f)$ measures the average \emph{local} variation of $f$ along the transitions of the chain. We wish to compare it to the average \emph{global} variation of $f$ across the whole space, as measured by 
\begin{eqnarray}
\cR_\pi(f) & := & \left\{
\begin{array}{ll}
\Var_{\pi}(f) & \textrm{in the Poincaré case}\\
\ent_\pi(f) & \textrm{in the log-Sobolev and modified log-Sobolev cases}.
\end{array}
\right.
\end{eqnarray}
To this end, we start by decomposing $\cR_\pi(f),\cL_\pi(f)$ according to the partition (\ref{partition}). Define a projected observable $\of\colon \cI\to(0,\infty)$ by $\of(i) := \EE_{\pi_i}(f)$. It is immediate to check that
\begin{eqnarray}
\label{DEC1}
\cR_\pi(f) & = & \sum_{i\in\cI}\opi(i)\cR_{\pi_i}(f)+\cR_{\opi}(\of);\\
\label{DEC2}
\cL_\pi(f) & = & \sum_{i\in\cI}\opi(i)\cL_{\pi_i}(f)+\frac{1}{2}\sum_{i\ne j}\sum_{(x,y)\in\Omega_i\times\Omega_j}\pi(x)Q(x,y)\Psi\left(f(x),f(y)\right).
\end{eqnarray} 
In light of the similarity between the right-hand sides, it is then natural to hope for a term-by-term comparison. The problematic quantity is of course the second sum on the right-hand side of (\ref{DEC2}), which we would like to replace by  $\cL_{\opi}(\of)$. To this end, let us fix a pair $(i,j)\in\cI^2$ such that $\oQ(i,j)>0$. Since $\kappa_{ij}$ is a coupling of $\pi_i$ and $\pi_j$, we have
\begin{eqnarray}
\of(i) & = & \sum_{(x,y)\in \Omega_i\times\Omega_j}\kappa_{ij}(x,y)f(x),\\
\of(j) & = & \sum_{(x,y)\in \Omega_i\times\Omega_j}\kappa_{ij}(x,y)f(y).
\end{eqnarray}
Thus, the pair $(\of(i),\of(j))$ is the average of the function $(x,y)\mapsto(f(x),f(y))$ under the probability distribution $\kappa_{ij}$. The crucial observation is that the bivariate function $\Psi$ defined at  (\ref{def:Psi}) is convex, as can be easily checked by verifying that the hessian matrix
\begin{eqnarray}
H & := &
\left[
\begin{array}{cc}
\partial_{uu}\Psi & \partial_{uv}\Psi  \\
\partial_{vu}\Psi  & \partial_{vv}\Psi
\end{array}
\right],
\end{eqnarray}
is positive semi-definite  in each case. Therefore, Jensen's inequality ensures that
\begin{eqnarray}
\sum_{(x,y)\in \Omega_i\times\Omega_j}\kappa_{ij}(x,y)\Psi(f(x),f(y)) & \ge & \Psi\left(\of(i),\of(j)\right).
\end{eqnarray}
Multiplying through by $\chi \opi(i)\oQ(i,j)$ and recalling the definition of $\chi$, we deduce that
\begin{eqnarray}
\sum_{(x,y)\in\Omega_i\times\Omega_j}\pi(x)Q(x,y)\Psi\left(f(x),f(y)\right) & \ge & {\chi}\opi(i)\oQ(i,j)\Psi\left(\of(i),\of(j)\right).
\end{eqnarray}
Although this was established under the assumption that $\oQ(i,j)>0$, the conclusion  remains trivially valid when $\oQ(i,j)=0$. Summing over all pairs $(i,j)\in\cI^2$ with $i\ne j$ and plugging the resulting estimate back into (\ref{DEC2}), we arrive at
\begin{eqnarray}
\cL_{\pi}\left(f\right) & \ge & \sum_{i\in\cI}\opi(i)\cL_{\pi_i}(f)+\chi \cL_{\opi}(\of).
\end{eqnarray}
Comparing this with (\ref{DEC1}), we immediately deduce that for any $\ok,\kappa_i\ge 0$,
\begin{eqnarray}
\left\{
\begin{array}{l}
\cL_{\opi}(\of) \ge \ok\cR_{\opi}(\of)\\
\forall i\in\cI, \cL_{\pi_i}(f) \ge \kappa_i\cR_{\pi_i}(f)
\end{array}
\right.
& \Longrightarrow & 
\cL_\pi(f) \ge  \min\left\{\chi\ok,\min_{i\in\cI} \kappa_i\right \}\cR_\pi(f).
\end{eqnarray}
Since this implication holds for all observables $f\colon\Omega\to(0,\infty)$, the three claims follow.
\end{proof}
We end this section with a crude estimate on the ratio appearing in the definition of $\chi$.
\begin{lemma}[Crude lower-bound on $\chi$]\label{lm:crude}We always have 
\begin{eqnarray}
\frac{\pi(x)Q(x,y)}{\opi(i)\oQ(i,j)\kappa_{ij}(x,y)} & \ge & \max\left\{\frac{Q(x,y)}{\oQ(i,j)},\frac{Q(y,x)}{\oQ(j,i)}\right\},
\end{eqnarray}
provided the denumerator on the left-hand side is positive.
\end{lemma}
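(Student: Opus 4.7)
The plan is to exploit two facts: the marginal constraints on the coupling $\kappa_{ij}$, and the reversibility relations for both $Q$ and $\oQ$.

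First, I would observe that since $\kappa_{ij}$ is a coupling of $\pi_i$ and $\pi_j$, the marginal identity (\ref{coupling:x}) forces $\kappa_{ij}(x,y) \le \pi_i(x) = \pi(x)/\opi(i)$. Rearranging gives $\opi(i)\kappa_{ij}(x,y) \le \pi(x)$, and dividing the ratio on the left by the appropriate quantities yields
\begin{eqnarray*}
\frac{\pi(x)Q(x,y)}{\opi(i)\oQ(i,j)\kappa_{ij}(x,y)} & \ge & \frac{Q(x,y)}{\oQ(i,j)}.
\end{eqnarray*}
This handles the first term inside the maximum.

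For the second term, I would use reversibility in two places. The local balance equation (\ref{reversible}) for $Q$ gives $\pi(x)Q(x,y)=\pi(y)Q(y,x)$, while the analogous identity for the projected chain (stated just after the definition of $\oQ$) gives $\opi(i)\oQ(i,j)=\opi(j)\oQ(j,i)$. Substituting both on the left-hand side gives
\begin{eqnarray*}
\frac{\pi(x)Q(x,y)}{\opi(i)\oQ(i,j)\kappa_{ij}(x,y)} & = & \frac{\pi(y)Q(y,x)}{\opi(j)\oQ(j,i)\kappa_{ij}(x,y)}.
\end{eqnarray*}
Now I can invoke the other marginal constraint (\ref{coupling:y}), namely $\kappa_{ij}(x,y)\le \pi_j(y)=\pi(y)/\opi(j)$, i.e. $\opi(j)\kappa_{ij}(x,y)\le \pi(y)$, to bound the right-hand side from below by $Q(y,x)/\oQ(j,i)$.

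Taking the maximum of the two lower bounds concludes the proof. There is no real obstacle here: the only thing to be careful about is that all the denominators appearing in the reversibility-based rewriting are positive, which is guaranteed by the hypothesis that the original denominator $\opi(i)\oQ(i,j)\kappa_{ij}(x,y)$ is positive, together with the fact that local balance for $\oQ$ then forces $\opi(j)\oQ(j,i)>0$, and local balance for $Q$ forces $\pi(y)Q(y,x)=\pi(x)Q(x,y)$ to share the same sign as its counterpart.
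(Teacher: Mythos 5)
Your proof is correct and follows essentially the same route as the paper: the two marginal bounds $\kappa_{ij}(x,y)\le\pi_i(x)$ and $\kappa_{ij}(x,y)\le\pi_j(y)$ combined with reversibility of $Q$ and $\oQ$ to identify the two ratios. The only cosmetic difference is that the paper states the two inequalities first and then observes their left-hand sides coincide by reversibility, whereas you rewrite the ratio first and then apply the second marginal bound; the content is identical.
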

\begin{proof}
Since $\kappa_{ij}$ is a coupling of $\pi_i$ and $\pi_j$, we have $\kappa_{ij}(x,y)\le \pi_i(x)$ and $\kappa_{ij}(x,y)\le \pi_j(y)$. Thus,
\begin{eqnarray}
\frac{\pi(x)Q(x,y)}{\opi(i)\oQ(i,j)\kappa_{ij}(x,y)}  & \ge & \frac{Q(x,y)}{\oQ(i,j)},\\
\frac{\pi(y)Q(y,x)}{\opi(j)\oQ(j,i)\kappa_{ij}(x,y)} & \ge & \frac{Q(y,x)}{\oQ(j,i)}.
\end{eqnarray}
The claim follows by noting that the left-hand sides of these two lines are equal, by reversibility.
\end{proof}

\subsection{Proof of Theorem \ref{th:main}}

We are now in position to prove Theorem \ref{th:main} by induction over the number of states $|\Omega|$. The claim is trivial when $|\Omega|=1$. Now, consider a probability distribution $\pi$ on $\{0,1\}^n$  whose support  $\Omega$ has at least two elements. This means that there is an index $\ell\in\{1, \ldots, n\}$ such that the subsets
\begin{eqnarray}
\Omega_0 := \left\{x\in\Omega\colon x_\ell=0\right\}, & \qquad & 
\Omega_1 := \left\{x\in\Omega\colon x_\ell=1\right\},
\end{eqnarray}
are both non-empty. Let $Q$ be any Markov generator on $\Omega$ under which $\pi$ is reversible, and let $(Q_0,\Omega_0,\pi_0)$, $(Q_1,\Omega_1,\pi_1)$ and  $(\overline{Q},\{0,1\},\opi)$ be the restriction and projection chains induced by the partition $\Omega=\Omega_0\cup\Omega_1$. Using the short-hands $a:=\oQ(0,1)$ and $b:=\oQ(1,0)$, we have
\begin{eqnarray}
\label{def:twostate}
\oQ & = & \left[
\begin{array}{cc}
-a & a  \\
b  & -b
\end{array}
\right].
\end{eqnarray}
Functional-analytic estimates for such two-state chains can easily be found in the literature. 
\begin{lemma}[Two-state chains, see, e.g. \cite{MR2283379}]\label{lm:twostate}For any choice of the rates $a,b\ge 0$, we have
\begin{eqnarray}
\lambda(\oQ) \ = \ a+b, \qquad
\alpha(\oQ) \ \in \ \left[a+b,2(a+b)\right], \qquad
\rho(\oQ) \ = \ 
\left\{
\begin{array}{ll}
\frac{a-b}{\log a-\log b} & \textrm{if }a\ne b\\
a & \textrm{if } a=b.
\end{array}
\right.
\end{eqnarray}
Since $\left|a-b\right| \ge \min(a,b)\left|\log a-\log b\right|$, the last estimate implies in particular 
$
\rho(\oQ)  \ge \min(a,b).
$
\end{lemma}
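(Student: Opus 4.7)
The plan is to reduce each of the three claims to an explicit calculation on the two-point space. First, using the reversibility identity $\opi(0)a = \opi(1)b$ together with $\opi(0)+\opi(1)=1$, I would derive $\opi(0) = b/(a+b)$ and $\opi(1) = a/(a+b)$. A short direct calculation then shows that for any function $f$ with values $u:=f(0)$ and $v:=f(1)$,
\begin{align*}
\cE_\opi(f,f) \;=\; \frac{ab}{a+b}(u-v)^2, \qquad \Var_\opi(f) \;=\; \frac{ab}{(a+b)^2}(u-v)^2,
\end{align*}
whose ratio is identically $a+b$. This immediately yields $\lambda(\oQ) = a+b$, attained by every non-constant $f$.

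For the log-Sobolev constant, I would perform the substitution $f=g^2$ with $g(0)=u$, $g(1)=v$ and exploit the scale-invariance $(u,v)\mapsto(\mu u, \mu v)$ to normalize $\opi(0)u^2+\opi(1)v^2=1$. This reduces the ratio $\cE_\opi(\sqrt{f},\sqrt{f})/\ent_\opi(f)$ to a one-parameter optimization whose critical-point analysis produces the logarithmic mean $(a-b)/(\log a-\log b)$, with continuous extension $a$ when $a=b$. Since this is a classical calculation, I would cite it from \cite{MR2283379} or \cite{MR1410112} rather than reproduce it. The bound $\rho(\oQ)\ge\min(a,b)$ then follows from the elementary fact that $\log(1+t)\le t$ for $t>-1$: assuming $a>b$, we get $\log(a/b)\le (a-b)/b$, hence $(a-b)/\log(a/b)\ge b = \min(a,b)$.

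For the modified log-Sobolev constant, the upper bound $\alpha(\oQ)\le 2(a+b)$ is immediate from the hierarchy (\ref{hier}) combined with $\lambda(\oQ)=a+b$. The lower bound $\alpha(\oQ)\ge a+b$ amounts, after writing out $\cE_\opi(f,\log f)=\tfrac{ab}{a+b}(u-v)(\log u-\log v)$ and using $ab=(a+b)^2 pq$ with $p=\opi(0)$, $q=\opi(1)$, to the two-point inequality
\[
pq(u-v)(\log u-\log v) \;\ge\; pu\log u + qv\log v - (pu+qv)\log(pu+qv),
\]
valid for all $u,v>0$. This reduces by scaling to a one-variable inequality that I would verify by examining its value at the diagonal $u=v$ (both sides and first derivatives vanish), computing the second-order perturbation (which gives equality to leading order in $u-v$, matching $\alpha=\lambda$ on the diagonal), and checking monotonicity of the remainder—an argument I would again shortcut by invoking the standard two-point MLSI bound from \cite{MR2283379}.

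The main obstacle in the above plan is the one-dimensional log-Sobolev optimization: while the final formula is the logarithmic mean and is well-known, its rigorous derivation from scratch requires careful tracking of critical points and of the boundary limits $u\to 0$ or $v\to 0$, where the ratio tends to $+\infty$ and thus does not affect the infimum. For this reason the lemma is best stated with a citation, as the authors do.
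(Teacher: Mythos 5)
Your computations are correct, but note that the paper itself offers no proof of this lemma: it is quoted from the literature (Bobkov--Tetali \cite{MR2283379}), with only the final deduction $\rho(\oQ)\ge\min(a,b)$ from $|a-b|\ge\min(a,b)|\log a-\log b|$ spelled out --- exactly the elementary step you reproduce. So your proposal, which verifies $\lambda(\oQ)=a+b$ and the upper bound $\alpha(\oQ)\le 2(a+b)$ by hand and cites the survey for the log-Sobolev formula and the MLSI lower bound, is if anything more detailed than the source. One remark: the only place where your sketch is vague --- the two-point inequality
\begin{eqnarray*}
pq(u-v)(\log u-\log v) & \ge & pu\log u+qv\log v-(pu+qv)\log(pu+qv)
\end{eqnarray*}
--- can be closed without any ``monotonicity of the remainder'' bookkeeping: both sides are $1$-homogeneous in $(u,v)$, so set $v=1$ and check that $g(u):=pq(u-1)\log u-\bigl[pu\log u-(pu+q)\log(pu+q)\bigr]$ satisfies $g(1)=g'(1)=0$ and $g''(u)=\frac{pq\,(pu^2+q)}{u^2(pu+q)}>0$, whence $g\ge 0$ by convexity. (Also, your algebra implicitly assumes $a+b>0$; the degenerate case $a=b=0$ is trivial, and is the only caveat in the statement's ``$a,b\ge 0$''.) With that observation your argument is a complete, self-contained proof of the lemma rather than a citation.
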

 In order to apply Lemma \ref{lm:main}, it now remains to construct an appropriate coupling of $\pi_0$ and $\pi_1$. This is provided by the following lemma, which is where the \textbf{SCP}  comes into play.  
\begin{lemma}[Exploiting the \textbf{SCP}]\label{lm:SCP}If $\pi$ has the \textbf{SCP}, there is a coupling $\kappa$ of $\pi_0,\pi_1$  supported on 
\begin{eqnarray}
\left\{(x,y)\in\Omega_0\times\Omega_1\colon x\sim y\right\}.
\end{eqnarray}
In particular, taking $\kappa_{01}$ and $\kappa_{10}$ to be $\kappa$ and its transpose, respectively, we obtain
\begin{eqnarray}
\chi & \ge & \max\left\{\frac{\bM(Q)}{\max\left(a,b\right)},\frac{\bm(Q)}{\min\left(a,b\right)}\right\}.
\end{eqnarray}
\end{lemma}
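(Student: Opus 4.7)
The plan is to apply the \textbf{SCP} to the singleton $S=\{\ell\}$ and then to extend the resulting coupling on $S^c$ to a coupling of $\pi_0$ and $\pi_1$, checking that the extension is automatically supported on the $\sim$-relation. Concretely, regard $x=\mathfrak{e}_\ell$ and $y=0$ as elements of $\{0,1\}^S$; since $x=y+\mathfrak{e}_\ell$, we have $x\triangleright y$, and hence by the \textbf{SCP}
\begin{eqnarray*}
\mu_0 \ :=\ \pi\left(X_{S^c}=\cdot\mid X_\ell=0\right) &\triangleright& \pi\left(X_{S^c}=\cdot\mid X_\ell=1\right)\ =:\ \mu_1.
\end{eqnarray*}
By definition of $\triangleright$ lifted to measures, this furnishes a coupling $\widetilde{\kappa}$ of $(\mu_0,\mu_1)$ supported on pairs $(u,v)\in\{0,1\}^{S^c}\times\{0,1\}^{S^c}$ with either $u=v$ or $u=v+\mathfrak{e}_j$ for some $j\neq\ell$.

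Next I would define $\kappa(x,y):=\widetilde{\kappa}(x_{S^c},y_{S^c})$ for $(x,y)\in\Omega_0\times\Omega_1$. Since conditioning on $X_\ell=0$ (resp.\ $X_\ell=1$) just fixes the $\ell$-coordinate, this is a coupling of $\pi_0$ and $\pi_1$. For the support check I would split into the two cases allowed by $\widetilde{\kappa}$: if $x_{S^c}=y_{S^c}$, then $x$ and $y$ differ only at coordinate $\ell$, so $y=x+\mathfrak{e}_\ell$, a flip; whereas if $x_{S^c}=y_{S^c}+\mathfrak{e}_j$ for some $j\neq\ell$, then combined with $x_\ell=0,y_\ell=1$ we get $y=x+\mathfrak{e}_\ell-\mathfrak{e}_j$, a swap. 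In either case $x\sim y$, as required.

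For the lower bound on $\chi$, I would take $\kappa_{01}:=\kappa$ and $\kappa_{10}$ to be its transpose, and invoke Lemma~\ref{lm:crude}. For any $(x,y,0,1)$ with $\kappa_{01}(x,y)>0$ the previous paragraph guarantees $x\sim y$, so $Q(x,y)$ and $Q(y,x)$ are both bounded below by $\bm(Q)$ and at least one of them is bounded below by $\bM(Q)$. Combining this with
\begin{eqnarray*}
\max\left\{\frac{Q(x,y)}{a},\frac{Q(y,x)}{b}\right\} &\ge & \max\left\{\frac{\max(Q(x,y),Q(y,x))}{\max(a,b)},\ \frac{\min(Q(x,y),Q(y,x))}{\min(a,b)}\right\},
\end{eqnarray*}
(the first term follows by picking the coordinate realizing the outer max, the second by picking the coordinate where the denominator is smallest) and symmetrically for $(x,y,1,0)$, gives exactly
\begin{eqnarray*}
\chi &\ge & \max\left\{\frac{\bM(Q)}{\max(a,b)},\ \frac{\bm(Q)}{\min(a,b)}\right\}.
\end{eqnarray*}

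The conceptually non-trivial step is the first one: recognising that the singleton partition on $\{\ell\}$ is precisely what allows a single application of the \textbf{SCP} to produce a coupling between the two halves of the partition with local transport. Once that is in place, the extension to $\Omega_0\times\Omega_1$ and the verification of the $\chi$-bound via Lemma~\ref{lm:crude} are mechanical.
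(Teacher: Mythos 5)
Your proposal is correct and follows essentially the same route as the paper: apply the \textbf{SCP} with the singleton $S=\{\ell\}$ to obtain a $\triangleright$-coupling of the two conditional laws, lift it to a coupling $\kappa$ of $\pi_0,\pi_1$ supported on $\sim$ (splitting into the flip case $x_{S^c}=y_{S^c}$ and the swap case $x_{S^c}=y_{S^c}+\e_j$), and then invoke Lemma~\ref{lm:crude}. The only difference is that you spell out the elementary max/min manipulation needed to pass from $\max\{Q(x,y)/a,\,Q(y,x)/b\}$ to $\max\{\bM(Q)/\max(a,b),\,\bm(Q)/\min(a,b)\}$, a step the paper leaves as ``readily follows from the definitions.''
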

\begin{proof}For ease of notation, let us here assume that $\ell=n$. The  \textbf{SCP} ensures that we can construct, on a common probability space, two random vectors $(X_1,\ldots,X_{n-1})$ and $(Y_1,\ldots,Y_{n-1})$  such that
\begin{enumerate}
\item $\left(X_1,\ldots,X_{n-1},0\right)$ has law $\pi_0$ ;
\item $\left(Y_1,\ldots,Y_{n-1},1\right)$ has law $\pi_1$ ;
\item $\left(X_1,\ldots,X_{n-1}\right)\triangleright \left(Y_1,\ldots,Y_{n-1}\right)$ with probability $1$.
\end{enumerate}
By definition of $\triangleright$ and $\sim$, the property $\left(X_1,\ldots,X_{n-1}\right)\triangleright \left(Y_1,\ldots,Y_{n-1}\right)$ deterministically implies
\begin{eqnarray}
\left(X_1,\ldots,X_{n-1},0\right) & \sim & \left(Y_1,\ldots,Y_{n-1},1\right).
\end{eqnarray}
The joint law $\kappa$ of these two vectors is thus a coupling of $\pi_0$ and $\pi_1$ satisfying the claim. To estimate the resulting constant $\chi$, we simply invoke Lemma \ref{lm:crude} to get
\begin{eqnarray}
\chi & \ge & \min\left\{\max\left\{\frac{Q(x,y)}{a},\frac{Q(y,x)}{b}\right\}\colon{(x,y)\in\Omega_0\times\Omega_1}, x\sim y\right\}.
\end{eqnarray}
The second claim now readily follows from this and the definitions of $\bm(Q),\bM(Q)$. 
\end{proof}
We now have all we need to complete our induction step. Using Lemmas \ref{lm:twostate} and \ref{lm:SCP}, we find
\begin{eqnarray}
\chi\lambda(\oQ)  & \ge  & \bM(Q),\\
\chi\alpha(\oQ)& \ge &  \bM(Q),\\
\chi\rho(\oQ) & \ge  & \bm(Q).
\end{eqnarray}
Applying Lemma \ref{lm:main}, we deduce that
\begin{eqnarray}
\lambda(Q) & \ge & \min\left\{\bM(Q),\lambda(Q_0),\lambda(Q_1)\right\};\\
\alpha(Q) & \ge & \min\left\{\bM(Q),\alpha(Q_0),\alpha(Q_1)\right\};\\
\rho(Q) & \ge & \min\left\{\bm(Q),\rho(Q_0),\rho(Q_1)\right\}.
\end{eqnarray}
Now, observe that the \textbf{SCP} is closed under conditioning, so that $\pi_0$ and $\pi_1$ inherit it from $\pi$. Thus, the induction hypothesis applies to the restriction chains $(\Omega_i,\pi_i,Q_i)$, $i\in\{0,1\}$, yielding
\begin{eqnarray}
\lambda(Q_i)  & \ge &  \bM(Q_i),\\
\alpha(Q_i)  & \ge &  \bM(Q_i),\\
\rho(Q_i)  & \ge &  \bm(Q_i).
\end{eqnarray}
Since we obviously have $\bM(Q_i)\ge \bM(Q)$ and $\bm(Q_i)\ge\bm(Q)$, we  conclude that
\begin{eqnarray}
\lambda(Q) &  \ge  & \bM(Q),\\
\alpha(Q) &  \ge &  \bM(Q),\\
\rho(Q) &  \ge &  \bm(Q).
\end{eqnarray}
Finally, the slight improvement appearing in (\ref{th:PI})-(\ref{th:MLSI}) simply follows from  (\ref{hier}).

\subsection{Proof of Theorem \ref{th:conc}} We now prove Theorem \ref{th:conc} by induction over the dimension $n$. It will actually be slightly more convenient to change the normalization as follows. For any distribution $\pi$ on $\{0,1\}^n$ with the \textbf{SCP}, we will prove the existence of a flip-swap walk $Q$ for $\pi$ such that $\lambda(Q),\alpha(Q)\ge 1$ and
\begin{eqnarray}
\Delta(Q) & \le & \left\{
\begin{array}{ll}
n & \textrm{always}\\
2k & \textrm{if }\pi\textrm{ is }k-\textrm{homogeneous}.
\end{array}
\right.
\end{eqnarray}
The original claim is obtained by dividing all entries by $\Delta(Q)$. The base case $n=1$ is trivial, and we henceforth assume that $n\ge 2$. Fix a coordinate $\ell\in\{1,\ldots,n\}$. We may assume that the sets
\begin{eqnarray}
\label{def:l}
\Omega_0 := \left\{x\in\Omega\colon x_\ell=0\right\}, & \qquad & 
\Omega_1 := \left\{x\in\Omega\colon x_\ell=1\right\},
\end{eqnarray}
are both non-empty: otherwise, $\pi$ can be regarded as a measure on $\{0,1\}^{n-1}$, and the claim trivially follows from the induction hypothesis. Now, consider the corresponding projection and restriction measures $\opi, \pi_0,\pi_1$, as defined in Section \ref{sec:decomposition}. For $i\in\{0,1\}$, $\pi_i$ may be regarded as a measure on $\{0,1\}^{n-1}$, and it inherits the \textbf{SCP} from $\pi$. Notice that if $\pi$ is $k-$homogeneous, then so is $\pi_0$, while $\pi_1$ is $(k-1)-$homogeneous (when regarded as a measure on $\{0,1\}^{n-1}$).  Thus, for $i\in\{0,1\}$, the induction hypothesis provides a flip-swap walk $Q_i$ for $\pi_i$ satisfying $\lambda(Q_i),\alpha(Q_i)\ge 1$ and
\begin{eqnarray}
\label{induction}
\Delta(Q_i) & \le & \left\{
\begin{array}{ll}
n-1 & \textrm{always}\\
2(k-i) & \textrm{if }\pi\textrm{ is }k-\textrm{homogeneous}.
\end{array}
\right.
\end{eqnarray}
On the other hand, Lemma \ref{lm:SCP} provides a coupling $\kappa$ of $\pi_0$ and $\pi_1$  supported on the set 
\begin{eqnarray}
\left\{(x,y)\in\Omega_0\times\Omega_1\colon x\sim y\right\}.
\end{eqnarray}
With these ingredients in hands, we  define a flip-swap walk $Q$ for $\pi$ as follows: for $x\ne y\in\Omega$,
\begin{eqnarray}
\label{def:Q}
Q(x,y) & := & 
\left\{
\begin{array}{ll}
Q_0(x,y) & \textrm{if }(x,y)\in\Omega_0\times\Omega_0\medskip\\
Q_1(x,y) & \textrm{if }(x,y)\in\Omega_1\times\Omega_1\medskip\\
\displaystyle{\frac{\opi(0)\opi(1)\kappa(x,y)}{\pi(x)}} & \textrm{if }(x,y)\in\Omega_0\times\Omega_1\medskip\\
\displaystyle{\frac{\opi(0)\opi(1)\kappa(y,x)}{\pi(y)}} & \textrm{if }(x,y)\in\Omega_1\times\Omega_0.
\end{array}
\right.
\end{eqnarray}
Here again, the diagonal is implicitly adjusted so that the rows sum up to $0$.
In order to  estimate $\lambda(Q),\alpha(Q)$, we will now use Lemma \ref{lm:main} with the partition being $\Omega=\Omega_0\cup\Omega_1$ and the couplings $\kappa_{01}$ and $\kappa_{10}$ being $\kappa$ and its transpose, respectively. By Lemma \ref{lm:twostate}, the projection chain $\oQ$ satisfies
\begin{eqnarray}
\label{two}
\lambda(\oQ),\alpha(\oQ) & \ge & \oQ(0,1)+\oQ(1,0).
\end{eqnarray}
On the other hand, for any $(x,y)\in\Omega_0\times\Omega_1$ with $\kappa(x,y)>0$, we have by construction,
\begin{eqnarray}
\frac{\pi(x)Q(x,y)}{\opi(0)\oQ(0,1)\kappa_{01}(x,y)} & = & \frac{\opi(1)}{\oQ(0,1)} \ = \ \frac{\opi(0)}{\oQ(1,0)} \ =  \ \frac{\pi(y)Q(y,x)}{\opi(1)\oQ(1,0)\kappa_{10}(y,x)},
\end{eqnarray}
where the equality in the middle is nothing more than reversibility. Consequently, we see that 
\begin{eqnarray}
\chi & = & \frac{\opi(0)}{\oQ(1,0)}\ =\ \frac{\opi(1)}{\oQ(0,1)}.
\end{eqnarray}
Combining this with (\ref{two}), we deduce that
\begin{eqnarray}
\chi\lambda(\oQ),\chi\alpha(\oQ) & \ge & \opi(0)+\opi(1) \ = \ 1.
\end{eqnarray}
Recalling that $\lambda(Q_i),\alpha(Q_i)\ge 1$, we may finally invoke Lemma \ref{lm:main} to conclude that 
\begin{eqnarray}
\lambda(Q),\alpha(Q) & \ge & 1.
\end{eqnarray}
It now remains to estimate the diagonal entries. For $x\in\Omega_0$, we have by construction
\begin{eqnarray}
-Q(x,x)  & = & \sum_{y\in\Omega_0\setminus\{x\}}Q_0(x,y)+\frac{\opi(0)\opi(1)}{\pi(x)}\sum_{y\in\Omega_1}\kappa(x,y).
\end{eqnarray}
Using the definition of $\Delta(Q_0)$ and the fact that the first marginal of $\kappa$ is $\pi_0$, we deduce that
\begin{eqnarray}
\label{diag:0}
-Q(x,x)  &  \le & \Delta(Q_0)+\opi(1) \ = \ \Delta(Q_0)+\EE_\pi[X_\ell].
\end{eqnarray}
Similarly, if  $x\in\Omega_1$, we have 
\begin{eqnarray}
\label{diag:1}
-Q(x,x)  &  \le & \Delta(Q_1)+\opi(0) \ \le \ \Delta(Q_1)+x_\ell,
\end{eqnarray}
because $x_\ell=1$ in this case. Using (\ref{induction}), we deduce that for all $x\in\Omega$,
\begin{eqnarray}
\label{diag}
-Q(x,x) & \le & \left\{
\begin{array}{ll}
n & \textrm{always}\\
2k+\EE[X_\ell]-x_\ell & \textrm{if }\pi\textrm{ is }k-\textrm{homogeneous},
\end{array}
\right.
\end{eqnarray}
Unfortunately, the second line is \emph{not} bounded from above by $2k$. To fix this, our last step will consist in averaging over the choice of the coordinate $\ell$ used to define the partition (\ref{def:l}). We henceforth write $Q=Q^{(\ell)}$ to indicate explicitly the dependency upon   $\ell$. For each $\ell\in\{1,\ldots,n\}$, the construction (\ref{def:Q}) produces a flip-swap walk  $Q^{(\ell)}$ for $\pi$ satisfying $\lambda(Q^\ell),\alpha(Q^\ell)\ge 1$ and $\Delta(Q^\ell)\le n$. Since all these properties are preserved under convex combinations,  the generator
\begin{eqnarray}
Q^\star & := & \frac{1}{n}\sum_{\ell=1}^nQ^{(\ell)}
\end{eqnarray}
automatically inherits them. Now, if $\pi$ is $k-$homogeneous, then for each $x\in\Omega$, we have
\begin{eqnarray}
-Q^{\star}(x,x)  \ = \ -\frac{1}{n}\sum_{\ell=1}^nQ^{(\ell)}(x,x) & \le & \frac{1}{n}\sum_{\ell=1}^n\left(2k+\EE_\pi\left[X_\ell\right]-x_\ell\right)\
 =  \ 2k.
\end{eqnarray}
where the inequality follows from (\ref{diag}), and the equality from the definition of $k-$homogeneity. Thus, the generator $Q^\star$ enjoys all the desired properties, and this completes the induction step.
\subsection{The Herbst argument}
\label{sec:herbst}
For completeness, we finally explicitate the \emph{Herbst argument} used to turn any modified log-Sobolev inequality into a sub-Gaussian concentration estimate, as claimed at (\ref{conc:MLSI}). We emphasize that the approach is standard, and has been used in various settings, see e.g. \cite{MR1767995,MR3185193,MR2283379}. We also note that similar concentration results can  be derived via transportation inequalities, as observed by Sammer \cite[Prop.\ 3.1.3 and \S1.4.1]{Sammer} and by Sammer and Tetali \cite{ST}.
\begin{lemma}[Modified log-Sobolev implies Gaussian concentration]
Let $Q$ be a reversible Markov generator with respect to some probability distribution $\pi$ on a finite set $\Omega$. Then, 
\begin{eqnarray}
\pi\left(f \ge \EE_\pi[f]+a\right) & \le & \exp\left(-\frac{\alpha(Q)a^2}{4v(f)}\right),
\end{eqnarray}
for any observable $f\colon\Omega\to\R$ and any $a \ge 0$,  where 
\begin{eqnarray}
\label{def:v}
v(f) & := & \max_{x\in\Omega}\left\{\sum_{y\in\Omega}Q(x,y)\left[f(y)-f(x)\right]^2_+\right\}. 
\end{eqnarray}
\end{lemma}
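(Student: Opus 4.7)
The plan is to carry out the classical Herbst entropy-method argument. Without loss of generality, assume $\EE_\pi[f]=0$. For $\theta \ge 0$, let $H(\theta) := \EE_\pi[e^{\theta f}]$ and $K(\theta) := \log H(\theta)$. The key idea is to apply the \textbf{MLSI} to the positive observable $F := e^{\theta f}$ and exploit the resulting differential inequality for the log-moment generating function $K$.

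A direct computation gives $\ent_\pi(F) = \theta H'(\theta) - H(\theta)\log H(\theta) = H(\theta)\bigl(\theta K'(\theta) - K(\theta)\bigr)$. Plugging this into the \textbf{MLSI} bound $\alpha(Q)\,\ent_\pi(F) \le \cE_\pi(F, \log F) = \cE_\pi(e^{\theta f}, \theta f)$ yields
\[
\theta K'(\theta) - K(\theta) \;\le\; \frac{\cE_\pi(e^{\theta f}, \theta f)}{\alpha(Q)\, H(\theta)}.
\]
If one can establish the Dirichlet-form estimate $\cE_\pi(e^{\theta f}, \theta f) \le \theta^2 v(f)\, H(\theta)$, then the right-hand side is at most $\theta^2 v(f)/\alpha(Q)$. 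Since the left-hand side equals $\theta^2 (K(\theta)/\theta)'$, integrating from $0^+$ using the boundary value $K(\theta)/\theta \to K'(0) = 0$ as $\theta \to 0^+$ yields the sub-Gaussian bound $K(\theta) \le \theta^2 v(f)/\alpha(Q)$. The conclusion then follows from the Chernoff bound $\pi(f \ge a) \le \exp(K(\theta) - \theta a) \le \exp\bigl(\theta^2 v(f)/\alpha(Q) - \theta a\bigr)$, optimized at $\theta^\star = a\,\alpha(Q)/(2v(f))$ to give the announced $\exp\bigl(-\alpha(Q)\,a^2/(4v(f))\bigr)$.

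The crux is therefore the pointwise Dirichlet-form estimate. Starting from the symmetric representation
\[
\cE_\pi(e^{\theta f}, \theta f) \;=\; \frac{\theta}{2}\sum_{x,y}\pi(x)Q(x,y)\bigl(e^{\theta f(x)} - e^{\theta f(y)}\bigr)\bigl(f(x) - f(y)\bigr),
\]
I would invoke the elementary inequality $(e^a - e^b)(a-b) \le (a-b)^2 e^{\max(a,b)}$ (a consequence of the mean-value theorem applied to $t\mapsto e^t$), then split the resulting sum according to the sign of $f(x) - f(y)$ so that $e^{\theta\max(f(x), f(y))}$ becomes the exponential of whichever of the two values is larger. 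Exploiting reversibility $\pi(x)Q(x,y) = \pi(y)Q(y,x)$ to re-index one of the two halves, both pieces collapse into a single expression of the form $\sum_x \pi(x)\, e^{\theta f(x)} g(x)$, where $g(x)$ is a one-sided quadratic variation of $f$ at $x$ bounded by $v(f)$ by definition. Pulling the $H(\theta)$ factor out completes the estimate. The main technical obstacle is the careful bookkeeping in this reversibility-based symmetrization, where matching the precise one-sided form $[f(y) - f(x)]_+^2$ appearing in the definition of $v(f)$ requires some attention.
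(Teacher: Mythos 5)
Your proposal is correct and follows essentially the same route as the paper: the paper's proof is exactly this Herbst argument, with the normalization $F_t=e^{tf-ct^2}$ and the monotonicity of $t\mapsto\frac{1}{t}\log\EE_\pi[F_t]$ playing the role of your integrated differential inequality for $K(\theta)/\theta$, and with the elementary bound $1-e^{-u}\le u$ in place of your mean-value inequality. On the one point you flag as requiring attention: carried out as you describe, the symmetrization places the exponential weight at the point with the larger value of $f$ and yields $\cE_\pi(e^{\theta f},\theta f)\le \theta^2 H(\theta)\max_{x}\sum_{y}Q(x,y)\left[f(x)-f(y)\right]_+^2$, i.e.\ the one-sided quadratic variation with the opposite sign convention to the displayed definition of $v(f)$; this is exactly the quantity that appears in the paper's own computation as well, so the mismatch is a sign-convention slip in the statement (the $[f(y)-f(x)]_+$ version controls the lower tail, as one sees by applying the result to $-f$), not a gap your argument needs to close.
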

\begin{proof}To lighten notations, we drop the subscript $\pi$.  For $t\in(0,\infty)$ and $x\in\Omega$, define
\begin{eqnarray}
F_t(x) & := & e^{tf(x)-ct^2},
\end{eqnarray}
where $c>0$ will be adjusted later.  Using the reversibility $\pi(x)Q(x,y)=\pi(y)Q(y,x)$, we have
\begin{eqnarray}
\cE\left(F_t,\log F_t\right) 
& = &  \frac{t}{2} \sum_{(x,y)\in \Omega^2}\pi(x)Q(x,y)\left(F_t(x)-F_t(y)\right)\left(f(x)-f(y)\right)\\
& = & t \sum_{(x,y)\in \Omega^2}\pi(x)Q(x,y)F_t(x)\left(1-e^{-t\left(f(x)-f(y)\right)}\right)\left[f(x)-f(y)\right]_+\\
& \le & t^2\sum_{(x,y)\in \Omega^2}\pi(x)Q(x,y)F_t(x) \left[f(x)-f(y)\right]^2_+\\
& \le & t^2 v(f)\EE[F_t],
\end{eqnarray}
thanks to the bound $1-e^{-u}\le u$ and (\ref{def:v}). 
Recalling the definition of $\alpha(Q)$, we deduce that
\begin{eqnarray}
\label{herbst}
\ent(F_t) & \le & \frac{v(f)}{\alpha(Q)}t^2\EE[F_t].
\end{eqnarray}
On the other hand, for any $t>0$, we easily compute
\begin{eqnarray}
\frac{d}{dt}\left\{\frac{\log\EE\left[F_t\right]}{t}\right\} & = & \frac{\ent\left[F_t\right]-{ct^2}\EE[F_t]}{t^2\EE\left[F_t\right]}.
\end{eqnarray}
Choosing $c:=\frac{v(f)}{\alpha(Q)}$, we see that $t\mapsto \frac {\log\EE\left[F_t\right]}{t}$ is non-increasing on $(0,\infty)$. In particular,
\begin{eqnarray}
\frac{\log\EE\left[F_t\right]}{t} & \le & \lim_{h\to 0}\uparrow \frac{\log\EE\left[F_h\right]}{h} \ = \ \EE[f],
\end{eqnarray}
or equivalently,  $\EE\left[e^{tf}\right]\le e^{t\EE[f]+ct^2}$. Using Chernov's bound, we deduce that for all $a,t\ge 0$,
\begin{eqnarray}
\pi\left(f \ge \EE[f]+a\right) & \le & e^{ct^2-at}.
\end{eqnarray}
The claim now follows by setting $t=\frac{a}{2c}$, so as to minimize the right-hand side.
\end{proof}
\section*{Acknowledgments} We thank Prasad Tetali for useful discussions and for bringing some relevant references to our attention.
\bibliographystyle{plain}
\bibliography{ZRP}

\end{document}